\newtheorem{theorem}{Theorem}
\newtheorem{proof}{Proof}
\date{}
\begin{document}

\title{$p$-Adic Invariant Summation \\ of Some $p$-Adic Functional Series}

\author{{Branko Dragovich and Nata\v sa \v Z. Mi\v si\'c*}
\\ {Institute of Physics, University of Belgrade, Pregrevica 118,  Belgrade, Serbia }\\
    {*Lola Institute, Kneza Vi\v seslava 70a, Belgrade, Serbia} }

\maketitle
\begin{abstract}
We consider summation of some finite and infinite  functional  $p$-adic series
with factorials. In particular, we are interested in the infinite series which are convergent for all primes $p$,
and have the same integer value for an integer argument.  In this paper, we present rather large class
of such $p$-adic  summable functional series with integer coefficients which contain factorials.
\end{abstract}

\section{Introduction}

It is well known that the series play important role in mathematics, physics and applications. When numerical
ingredients of the infinite series are rational numbers then they can be treated in any  $p$-adic as well as in real
number field, because rational numbers are endowed by real and $p$-adic norms simultaneously. In particular,
a real divergent series  deserves $p$-adic investigation when its $p$-adic sum is a rational for a rational argument.

Terms of many series in string theory, quantum field theory and quantum mechanics contain factorials. Such series are usually divergent in
the real case but convergent in $p$-adic ones. Motivated by this reason, different $p$-adic aspects of the series with factorials
are considered in \cite{bd1,bd2,bd3,bd4,bd5,bd6,bd7,bd8,bd9,bd10,bd11} and many summations are performed  in rational points. Also, notions of $p$-adic number
field invariant summation, rational summation, and adelic summation are introduced.

Note that $p$-adic numbers and $p$-adic analysis have been successfully applied in modern mathematical physics (from strings to complex systems
and the universe as a whole) and in some related fields (in particular in bioinformation systems, see, e.g. \cite{bd13}), see \cite{freund,vvz} for an early review and \cite{bd12} for a recent one.

In this paper we are interested in $p$-adic invariant summation of a class of infinite functional series which terms contain $n!$, i.e.
$\sum n! P_k (n; x) x^n ,$ where $P_k (n; x)$ are polynomials in $x$ of degree $k ,$ and which coefficients depend on $n .$ We  show that there exist
polynomials $P_k (n; x)$ for any degree $k ,$ such that for any $x \in \mathbb{Z}$ the corresponding sums are also  rational numbers. Moreover, we have found
recurrent relations how to calculate all ingredients of such  $P_k (n; x)$. The obtained summation formula is a generalization of previously
derived one when $x = 1$ (see \cite{bd10}) in $\sum n! P_k (n; x) x^n .$  Many results are illustrated by some simple examples.

 All necessary general information on $p$-adic series can be found in standard books on $p$-adic analysis, see, e.g. \cite{schikhof}.

%We consider summation of $p$-adic series of the form
%$$
%\sum_{n=0}^\infty  n!\, [n^k x^k + U_k (x)] \, x^n = V_{k-1} (x) ,
%$$
%where $U_k (x) = (-1)^{k+1} x^k + u_{k-1} x^{k-1} + \cdots + u_1 x
%-1$ and $ V_k (x) = (-1)^{k+1} (k+1) x^k + v_{k-1} x^{k-1} +
%\cdots + v_1 x -1$ are polynomials in $x$ with $ u_i, \, v_i \in
%\mathbb{Z}$ and $k \in \mathbb{N}.$ These series are convergent
%for all $x \in \mathbb{Z}_p$ and have the same sum $V_{k-1}(x) \in
%\mathbb{Z}$ for any $x \in \mathbb{Z}$ and all primes $p$. Having
%integer sums for integer arguments independently on $p$, these
%series may have some applications in $p$-adic mathematical physics
%and related topics. The above summation formula generalizes the
%previous results obtained by the author for $x = +1$ and $x = -1$.
%Here, we have found recurrent formulas which give us possibility
%to determine polynomials $U_k (x)$ and $V_k (x)$ in the explicit
%way.

\section{$p$-Adic Functional Series with Factorials}

We consider $p$-adic functional series of the form \begin{align}
S_k (x) = \sum_{n = 0}^{+\infty} n! \, P_k (n; x)\, x^n = P_k (0;x) + 1!\, P_k (1;x)\, x  + 2!\, P_k (2;x)\, x^2 + \dots \,, \label{2.1}
\end{align}
where
\begin{align} \label{2.2}
& P_k (n;x) =C_k (n)\, x^k + C_{k-1} (n)\, x^{k-1} + \dots + C_1(n)\, x + C_0(n)\,,  \\
& n! = 1\cdot 2 \cdots n, \, (0!=1), \quad k \in \mathbb{N}_0 = \mathbb{N}\cup\{ 0 \}, \quad x \in \mathbb{Q}_p \nonumber
\end{align}
and $C_j (n), \, 0 \leq j \leq k ,$ are some polynomials in $n$ with integer
coefficients. Mainly we are interested for which polynomials $P_k(n; x)$ we have that if $x \in \mathbb{Z}$ then the sum of the series
(1) is $S_k (x) \in \mathbb{Z}$, i.e. $S_k (x)$ is also an integer the same in all $p$-adic cases. Since polynomials $P_k(n; x)$ are determined
by polynomials  $C_j (n), \, 0 \leq j \leq k ,$ it means that one has to find these $C_j (n), \, 0 \leq j \leq k .$  Our task is to find connection
between polynomial $P_k(n; x)$ and sum $S_k (x),$ which are also a polynomial.

\subsection{Convergence of the $p$-Adic Series}

Note that necessary and sufficient condition for $p$-adic power series
to be convergent \cite{schikhof,vvz} is
\[
f(x) = \sum_{n=1}^{+\infty} a_n x^n ,   \quad a_n \in \mathbb{Q} \subset \mathbb{Q}_p ,
\quad x \in \mathbb{Q}_p ,  \quad |a_n x^n|_p \to 0 \,\, as \,\, n
\to \infty .
\]
Necessary condition follows like in the real case, and sufficient condition is a consequence
of ultrametricity. Recall that  ultrametric distance satisfies the strong triangle inequality
$
d(x,y) \leq max \{d(x,y), d(y,z)\}.
$   $p$-Adic distance $d_p (x,y) = |x-y|_p$ is ultrametric one, where $|\cdot|_p$ is $p$-adic norm.
According to the Cauchy criterion, the sufficiency follows from $|a_n + a_{n+1} + \cdots + a_{n+m}|_p \leq max_{n\leq i\leq n+m} |a_i|_p = |a_n|_p \to 0$ when
$n \to \infty .$

 The functional series (1) contains $n!$, hence to investigate its convergence one has to know $p$-adic norm of $n! .$
 First, let us find a power $M(n)$ by which  prime $p$ is contained in $n!$  (see, e.g. \cite{vvz}). Let $n = n_0 + n_1 p +
... + n_r p^r $ and $ s_n = n_0 + n_1 + ... + n_r $ denotes the sum of digits in expansion of a natural number $n$ in base $p$. We also denote by $[x]$
the integer part of a number $x$.   Then
\begin{align} M(n) &= \Big[\frac{n}{p}\Big] + \Big[\frac{n}{p^2}\Big] +
\cdots + \Big[\frac{n}{p^r}\Big] \nonumber \\
&= \frac{n-n_0}{p} + \frac{n-n_0 -n_1 p}{p^2} + \cdots +
\frac{n-n_0 -n_1 p  - \cdots - n_{r -1} p^{r-1}}{p^r} \nonumber \\
& = \frac{n}{p} \Big(1 + \frac{1}{p} + \cdots + \frac{1}{p^{r-1}}\Big) -
\frac{n_0}{p} \Big(1 + \frac{1}{p} + \cdots + \frac{1}{p^{r-1}}\Big) \nonumber \\
& - \frac{n_1}{p} \Big(1 + \frac{1}{p} + \cdots + \frac{1}{p^{r-2}}\Big) -
\frac{n_2}{p} \Big(1 + \frac{1}{p} + \cdots + \frac{1}{p^{r-3}}\Big)\nonumber \\
& - \cdots - \frac{n_{r-1}}{p} .  \label{2.3}
 \end{align}

 Performing summation in \eqref{2.3},  it follows

\begin{align} M(n) &= n \frac{1-p^{-r}}{p-1} - n_0 \frac{1-p^{-r}}{p-1}
- n_1 \frac{1-p^{-r+1}}{p-1} - n_2 \frac{1-p^{-r+2}}{p-1}
\nonumber \\ &- \cdots - n_{r-1} \frac{1-p^{-1}}{p-1} -
n_r\frac{1}{p-1} + n_r\frac{1}{p-1} \nonumber \\ &= \frac{n -
s_n}{p-1} \label{2.4}
\end{align}

 Hence, one obtains
 \begin{align} &n! = m\, p^{M(n)} = m\,  p^{\frac{n -s_n}{p-1}} ,\quad p \nmid  m , \nonumber  \\
 &|n!|_p = p^{-\frac{n -s_n}{p-1}} . \label{2.5} \end{align}

Consider $p$-adic norm of the general term in \eqref{2.1} with $P_k (n; 1)$, i.e.

\begin{align}  |n! P_k(n; 1)  x^n|_p \leq  |n! x^n|_p = p^{- \frac{n - s_n}{p-1}} |x|_p^n =
\Big( p^{- \frac{n - s_n}{n(p-1)}} |x|_p  \Big)^n \nonumber \end{align}
Then, because convergence requires $|n! P_k(n; 1)  x^n|_p \to 0$ as $n \to 0,$ one has the following relations:
\begin{align}
p^{- \frac{n - s_n}{n(p-1)}} |x|_p < 1, \quad  |x|_p < p^{
\frac{n - s_n}{n(p-1)}} \to p^{\frac{1}{p-1}} \,\, as \,\, n \to
\infty . \nonumber
\end{align}

 Hence, the region of convergence $D_p$ of the power series $\sum P_k(n; 1)n!x^n$
is $D_p =\mathbb{Z}_p$, i.e. $|x|_p \leq 1$, because norms in
$\mathbb{Q}_p$ are of the form $p^\nu , \,\, \nu \in \mathbb{Z}$
and $1 < p^{\frac{1}{p-1}}. $ The region of convergence $|x|_p \leq 1$ is valid also
for the series \eqref{2.1}, because $|P_k(n; x)|_p \leq |P_k(n; 1)|_p$ when $|x|_p \leq 1.$

 Since $\bigcap_p \mathbb{Z}_p = \mathbb{Z}$, it means that the
infinite series $\sum P_k(n; x) n! x^n$ is simultaneously convergent for all
integers and all $p$-adic norms.

\section{Summation at Integer Points}

We are interested now in determination of the polynomials $P_k(n; x)$ and the corresponding sums $S_k (x) = Q_k (x)$ of the infinite series
\eqref{2.1}, where
\begin{align}
Q_k (x)= q_k \, x^k \, + \, q_{k-1} \, x^{k-1} \, + \cdots + \, q_1 \, x^1 \, + \, q_0   \label{3.1}
\end{align}
are also some polynomials related to $P_k(n; x) ,$  so that $P_k(n; x) $ and $Q_k (x)$ do not depend on concrete
$p$-adic consideration and that they are valid for all $x \in \mathbb{Z} .$

The simplest illustrative case \cite{schikhof} of $p$-adic invariant summation of the infinite series is

\begin{align} \sum_{n \geq 0} n! \, n = 1! 1 + 2! 2 + 3! 3 + ... = - 1 \label{3.2} \end{align}
and obtains from \eqref{2.1} taking  $x = 1, \, \, P_1(n; 1) = n,$ which gives $Q_1 (1) = - 1.$
 To prove  \eqref{3.2}, one can use any one of the following two properties:
\[  n! n = (n+1)! - n!  \,, \quad \, \qquad \sum_{n = 1}^{N-1} n! \, n  = -1 + N! , \]
where $n! \to 0$ as $n \to \infty .$

In the sequel we shall develop and apply the corresponding method of the definite partial sums.

\subsection{The Definite Partial Sums}

\begin{theorem}
Let
\begin{equation}
A_k (n; x) = a_k(n) x^k + a_{k-1}(n) x^{k-1} + \cdots + a_1(n) x + a_0(n)  \label{3.3}
\end{equation}
be a polynomial with coefficients $a_j (n) ,\,\, 0\leq j \leq k,$ which are polynomials in $n$ with integer coefficients.
Then there exists such polynomial $A_{k-1} (N; x), \,\, N \in \mathbb{N},$ that equality
\begin{align} \sum_{n=0}^{N-1} n! \, [ n^k x^k + U_k (x) ] x^n = V_k (x) + N! x^N A_{k-1}(N; x)   \label{3.4}  \end{align}
holds, where
\begin{align} U_k(x) = x A_{k-1} (1; x) - A_{k-1} (0; x) \,, \quad V_k(x) = - A_{k-1} (0; x).  \label{3.5}\end{align}
\end{theorem}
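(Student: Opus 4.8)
The plan is to prove the identity by telescoping. I introduce the auxiliary function $f(n)=n!\,x^n A_{k-1}(n;x)$ and choose $A_{k-1}$ so that each term of the series on the left of \eqref{3.4} is a forward difference,
\[
n!\,[\,n^k x^k + U_k(x)\,]\,x^n = f(n+1) - f(n).
\]
Since $f(n+1)-f(n)=n!\,x^n\big[(n+1)\,x\,A_{k-1}(n+1;x)-A_{k-1}(n;x)\big]$, cancelling the common factor $n!\,x^n$ turns this requirement into the functional equation
\[
(n+1)\,x\,A_{k-1}(n+1;x) - A_{k-1}(n;x) = n^k x^k + U_k(x). \qquad (\star)
\]
Once $(\star)$ holds, summing over $0\leq n\leq N-1$ collapses the left side to $f(N)-f(0)=N!\,x^N A_{k-1}(N;x)-A_{k-1}(0;x)$, which is exactly \eqref{3.4} with $V_k(x)=-A_{k-1}(0;x)$; and evaluating $(\star)$ at $n=0$ (where $n^k x^k$ vanishes for $k\geq 1$) reproduces $U_k(x)=x A_{k-1}(1;x)-A_{k-1}(0;x)$. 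Thus the whole statement reduces to solving $(\star)$ for a polynomial $A_{k-1}(n;x)$ of degree $k-1$ in $x$ whose coefficients are polynomials in $n$ with integer coefficients.

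To solve $(\star)$ I write $A_{k-1}(n;x)=\sum_{j=0}^{k-1} a_j(n)\,x^j$ and $U_k(x)=\sum_{j=0}^{k} u_j\,x^j$ and compare coefficients of equal powers of $x$. The factor $(n+1)\,x$ shifts degrees up by one, so the comparison yields a triangular system read off from the top: the coefficient of $x^k$ gives $(n+1)\,a_{k-1}(n+1)=n^k+u_k$; the coefficient of $x^j$ for $1\leq j\leq k-1$ gives $(n+1)\,a_{j-1}(n+1)=a_j(n)+u_j$; and the constant term gives $-a_0(n)=u_0$. Solving from the top downward determines $a_{k-1},a_{k-2},\dots,a_0$ successively, each as a quotient of the form $a_{j-1}(m)=\big(a_j(m-1)-a_j(-1)\big)/m$ (with $a_k(n):=n^k$ at the first step), and simultaneously fixes the constants $u_j=-a_j(-1)$.

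The main obstacle is integrality: I must show that every quotient produced by this recursion is a genuine polynomial in $n$ with integer coefficients rather than a rational function. This is precisely what the choice $u_j=-a_j(-1)$ secures, since it forces the numerator $a_j(n)-a_j(-1)$ (respectively $n^k+u_k=n^k-(-1)^k$ at the top) to vanish at $n=-1$, hence to be divisible by the monic linear factor $n+1$. Because the numerator has integer coefficients and $n+1$ is monic, the exact division leaves integer coefficients and lowers the degree by one, so $\deg_n a_{j-1}=\deg_n a_j-1$ and the recursion terminates with $a_0$ constant, consistent with $-a_0(n)=u_0$. A final consistency check — that the values $u_j=-a_j(-1)$ forced by divisibility coincide with the coefficients of $U_k(x)=x A_{k-1}(1;x)-A_{k-1}(0;x)$ — follows by evaluating the recursion at $m=0$, which gives $a_{j-1}(1)=a_j(0)-a_j(-1)$ and closes the argument.
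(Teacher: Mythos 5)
Your proof is correct, but it follows a genuinely different route from the paper's. The paper proves the theorem by a recurrence in the degree $k$: it expands $(n+1)^{k+1}$ binomially to relate $S_{k+1}(N;x)=\sum_{n=0}^{N-1}n!\,n^{k+1}x^n$ to all lower sums $S_\ell(N;x)$, posits the ansatz \eqref{3.7}, and extracts the recurrences \eqref{3.8}--\eqref{3.10} for $U_{k+1}$, $V_{k+1}$, $A_k$ in terms of all lower-index polynomials, finally obtaining \eqref{3.5} by specializing \eqref{3.10} at $N=0$ and $N=1$. You instead fix $k$ and construct $A_{k-1}$ directly: the telescoping ansatz $f(n)=n!\,x^nA_{k-1}(n;x)$ reduces everything to the single first-order difference equation $(\star)$ in $n$, which you solve as a triangular system in descending powers of $x$, with the constants $u_j=-a_j(-1)$ forced by requiring exact divisibility of the numerators by the monic factor $n+1$. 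Your argument has the advantage of actually \emph{proving} the structural facts the paper only asserts --- that $a_j(n)$ is an integer polynomial of degree $j$ with leading term $n^j$, and that $U_{kk}=(-1)^{k+1}$ drops out of the divisibility condition at the top of the cascade --- and it makes \eqref{3.5} an immediate consequence of evaluating $(\star)$ at $n=0$ and summing, rather than of manipulating \eqref{3.10}. What the paper's route buys in exchange is the explicit recurrences \eqref{3.8}--\eqref{3.10}, which are the practical tool used to tabulate $U_k$, $V_k$, $A_{k-1}$ for $k\le 6$. One small slip: your final consistency check should evaluate the recursion $m\,a_{j-1}(m)=a_j(m-1)-a_j(-1)$ at $m=1$, not $m=0$, to obtain $a_{j-1}(1)=a_j(0)-a_j(-1)$; as written, $m=0$ gives only the trivial identity $0=0$. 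This does not affect the validity of the argument, since that check is anyway redundant once $(\star)$ is established.
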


\begin{proof}
Let us consider summation of the following finite power series:
\[ S_k(N; x) = \sum_{n=0}^{N-1} n! n^k  x^n  = \delta_{0k} + \sum_{n=1}^{N-1} n! n^k  x^n, \quad   k \in \mathbb{N}_0  .  \]

We derive the following recurrent formula:
\begin{align}
S_k(N; x) &= \delta_{0k} + \sum_{n=0}^{N-2} (n+1)! (n+1)^k x^{n+1} = \delta_{0k} + x +
\sum_{n=1}^{N-1} n! (n+1)^{k+1} x^{n+1} - N! N^k x^N \nonumber \\
&= \delta_{0k} + x + x \sum_{n=1}^{N-1} n! x^n \sum_{\ell =0}^{k+1} \Big(\begin{aligned} k&+1 \\
&\ell \end{aligned}\Big) n^\ell - N! N^k x^N \nonumber \\ &= \delta_{0k} +
x S_0(N;x) + x  \sum_{\ell =1}^{k+1} \Big(\begin{aligned} k&+1\\
& \ell \end{aligned}\Big) S_\ell (N; x) - N! N^k x^N  .  \label{3.6}
\end{align}

The above recurrent formula gives possibility to calculate sum
$S_{k+1} (N; x)$ knowing all preceding sums $S_{\ell} (N; x), \,
\, \ell = 1, 2, ..., k$ as function of $S_{0} (N; x)$. These sums
have the form
\[ \sum_{n=0}^{N-1} n! \, [ n^k x^k + U_k (x) ] x^n = V_k (x) + N! x^N A_{k-1}(N; x) , \, \, k \in \mathbb{N} ,  \]
which can be rewritten  as
\begin{align}  S_k (N; x) = - U_k(x) S_0 (N; x) x^{-k} + V_k (x) x^{-k} + N! A_{k-1} (N; x) x^{N-k} , \, \, k \in \mathbb{N} , \label{3.7} \end{align}
where $U_k (x)$ and  $V_k (x)$ are some polynomials in x of the
degree  $k$, and $A_k (N; x)$ is a polynomial in $x$ which
coefficients of $x^n$ are polynomials in $N$ of degree $n$.

Substituting the above expressions of the sums \eqref{3.7} into the recurrence formula  \eqref{3.6}, we
obtain recurrence formulas for $U_{k+1} (x), V_{k+1} (x)$ and
$A_{k}(N; x):$
\begin{equation}
\sum_{\ell =1}^{k+1} \Big(\begin{aligned}k&+1\\ &\ell\end{aligned}\Big) x^{k-\ell+1} \, U_\ell(x) - U_k(x) - x^{k+1} = 0 , \label{3.8}
\end{equation}
\begin{equation}
\sum_{\ell =1}^{k+1} \Big(\begin{aligned}k&+1\\ &\ell\end{aligned}\Big) x^{k-\ell+1} \, V_\ell(x) - V_k(x)  = 0 , \label{3.9}
\end{equation}
\begin{equation}
\sum_{\ell =1}^{k+1} \Big(\begin{aligned} k&+1\\ &\ell\end{aligned}\Big) x^{k-\ell+1} \, A_{\ell-1}(N;x) - A_{k-1}(N;x) - N^kx^{k} = 0 . \label{3.10}
\end{equation}

Putting $N = 0$ in \eqref{3.10}, it follows that $V_k (x)$ is proportional to $A_{k-1} (0; x) .$  After explicit  calculation of  $A_0 (0; x)$ and $V_1 (x) ,$ we conclude that
$V_k (x) = -  A_{k-1} (0; x)$ for all $k \in \mathbb{N} .$  On the other hand, equality $U_k(x) = x A_{k-1} (1; x) - A_{k-1} (0; x)$ in \eqref{3.5} obtains if we replace $N =1$ in \eqref{3.10} and multiply it by $x$, and then subtracting  from such expression
recurrence relation \eqref{3.10} with $N= 0 .$

\end{proof}

It is worth emphasizing that equality \eqref{3.4} is valid in real and all $p$-adic cases. The central role in this equality plays
polynomial $A_{k} (N; x) $ which is solution of the recurrence relation \eqref{3.10}.
 When $N \to \infty$ in \eqref{3.4}, the term with polynomial
$A_{k-1} (N; x)$ $p$-adically disappears giving the sum of the following $p$-adic infinite functional series:
\begin{equation}
 \sum_{n=0}^{\infty} n! \, [ n^k x^k + U_k (x) ] x^n = V_k (x) .    \label{3.11}
\end{equation}
This equality has the same form for any $k \in \mathbb{N} ,$ and polynomials $U_k (x)$ and $V_k (x)$ separately have the same values in all $p$-adic cases for any
$x \in \mathbb{Z} .$ In other words, nothing depends on $p$-adic properties in \eqref{3.11} if $x \in \mathbb{Z}$, i.e. this is $p$-adic invariant
result. This result gives us the possibility to present a general solution of the problem posed on $p$-adic invariant summation of the series \eqref{2.1}.

\begin{theorem}
The functional series \eqref{2.1}  has $p$-adic invariant sum
\begin{equation}
\sum_{n=0}^{\infty} n! \, P_k (n; x) x^n = Q_k (x)   \label{3.12}
\end{equation}
if
\begin{equation}
P_k (n; x) = \sum_{j =1}^k C_j [n^j x^j + U_j (x)]    \quad \text{and} \quad   Q_k (x) = \sum_{j =1}^k C_j \, U_j (x),  \label{3.13}
\end{equation}
where $C_j, \, x \in \mathbb{Z} .$
\end{theorem}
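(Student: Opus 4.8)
The plan is to obtain \eqref{3.12} directly from Theorem 1, using only $\mathbb{Z}$-linearity of $p$-adic summation together with the atomic identity \eqref{3.11}. By hypothesis \eqref{3.13} the general term factors as
\[
n!\,P_k(n;x)\,x^{n} \;=\; \sum_{j=1}^{k} C_j\, n!\,\bigl[\,n^{j} x^{j} + U_j(x)\,\bigr]\,x^{n},
\]
so the whole series \eqref{3.12} is a finite $\mathbb{Z}$-linear combination of the $k$ building-block series $\sum_{n} n!\,[\,n^{j} x^{j} + U_j(x)\,]\,x^{n}$, each of which is precisely the left-hand side of \eqref{3.11} for the corresponding value of $j$. (A $j=0$ block would contribute $n!\,[\,1+U_0(x)\,]x^{n}$ with $U_0(x)=-1$, hence nothing, which is why the sum in \eqref{3.13} may start at $j=1$.)

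Next I would justify interchanging the finite sum over $j$ with the infinite sum over $n$. For $x\in\mathbb{Z}\subset\mathbb{Z}_p$ one has $|x|_p\le 1$, and the coefficient $n^{j}x^{j}+U_j(x)$ is an integer, so by the estimate of Section 2.1 the block term has $p$-adic norm at most $|n!|_p=p^{-(n-s_n)/(p-1)}\to 0$; hence every block series converges in $\mathbb{Q}_p$. A finite linear combination of $p$-adically convergent series converges and may be summed term-by-term, so the interchange is valid in every $\mathbb{Q}_p$ at once. Applying \eqref{3.11} to each block then replaces $\sum_{n} n!\,[\,n^{j}x^{j}+U_j(x)\,]x^{n}$ by its sum, yielding
\[
\sum_{n=0}^{\infty} n!\,P_k(n;x)\,x^{n} \;=\; \sum_{j=1}^{k} C_j\, V_j(x).
\]

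The only point requiring real care --- and what I regard as the crux rather than a genuine obstacle --- is the identification of the closed form of $Q_k(x)$. The identity \eqref{3.11} returns the constants $V_j(x)=-A_{j-1}(0;x)$ from \eqref{3.5}, not $U_j(x)$, so the summation formula should read $Q_k(x)=\sum_{j=1}^{k} C_j\,V_j(x)$; I would therefore flag that the $U_j(x)$ displayed in \eqref{3.13} ought to be $V_j(x)$. The benchmark \eqref{3.2} confirms this: for $k=1$, $C_1=1$, $x=1$ one has $U_1(x)=x-1$ and $V_1(x)=-1$, so $P_1(n;1)=n$ (because $U_1(1)=0$) while $Q_1(1)=V_1(1)=-1$, reproducing $\sum n!\,n=-1$. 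Finally, $p$-adic invariance is immediate: the recurrences \eqref{3.8}--\eqref{3.10} have integer coefficients, so $U_j,V_j\in\mathbb{Z}[x]$; thus for $C_j,x\in\mathbb{Z}$ both $P_k(n;x)$ and $Q_k(x)$ are ordinary integers, taking the same value in $\mathbb{R}$ and in every $\mathbb{Q}_p$, which is exactly the claimed $p$-adic invariant summation.
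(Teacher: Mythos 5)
Your proposal is correct and is essentially the paper's own (implicit) argument: Theorem 2 is stated without a separate proof precisely because it follows from \eqref{3.11} by $\mathbb{Z}$-linearity of convergent $p$-adic series, which is exactly the decomposition you use. You are also right to flag the misprint in \eqref{3.13}: by \eqref{3.11} the sum must be $Q_k(x)=\sum_{j=1}^{k}C_j\,V_j(x)$ rather than $\sum_{j=1}^{k}C_j\,U_j(x)$, as the worked examples for $k=1,2,3$ (where the right-hand sides are $-1$, $2x-1$, $-3x^2+5x-1$, i.e.\ the $V_j$'s) confirm.
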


Recurrent formulas \eqref{3.8}--\eqref{3.10} enable to calculate these polynomials for all $k \in \mathbb{N}$,
knowing initial expressions of  $U_1 (x),  V_1 (x)$ and $A_0 (N; x)$, which can be
obtained from  \eqref{3.6} and they are: $U_1(x) = x-1, \, V_1(x) =
-1$ and $A_0 (N; x) = 1$.

For the first five values of degree $k$ we have obtained the following explicit expressions.

\begin{itemize}
\item  $k = 1$
\begin{align}
&U_1(x) = x-1, \nonumber\\
&V_1(x) = -1, \nonumber \\
&A_0 (n; x) = 1.  \label{3.14}
\end{align}
\item  $k = 2$
\begin{align}
&U_2(x) = - x^2+ 3x -1, \nonumber\\
&V_2(x) =  2x - 1, \nonumber\\
&A_1(n; x) = (n -2) x  + 1 .   \label{3.15}
\end{align}
\item $k = 3$
\begin{align}
&U_3(x) =  x^3 - 7x^2  + 6x -1, \nonumber\\
&V_3(x) = -3 x^2 +5 x -1, \nonumber\\
&A_2(n; x) = (n^2 -3n +3)x^2 + (n -5) x  + 1 .  \label{3.16}
\end{align}
\item   $k = 4$
\begin{align}
&U_4(x) = -x^4 + 15x^3 - 25 x^2+ 10x -1, \nonumber\\
&V_4(x) =  4x^3 -17x^2 + 9x -1, \nonumber\\
&A_3(n; x) = (n^3 - 4n^2 +6n -4)x^3 +(n^2 -7n +17)x^2 +(n -9) x  + 1 .  \label{3.17}
\end{align}
\item    $k = 5$
\begin{align}
&U_5(x) = x^5 -31x^4 + 90 x^3 - 65 x^2+ 15x -1,  \nonumber \\
&V_5(x) = -5 x^4 + 49x^3 - 52 x^2 + 14x -1,   \nonumber\\
&A_4(n; x) =  (n^4 -5n^3 +10n^2 -10n +5)x^4 +  (n^3 - 9n^2 +31
n-49)x^3 \nonumber \\ & \qquad  \qquad + (n^2 -12n +52)x^2 + (n -14) x + 1 .  \label{3.18}
\end{align}
\item  $k = 6$
\begin{align}
&U_6(x) = -x^6 + 63x^5 -301 x^4 +350 x^3 - 140 x^2 +21 x -1, \nonumber \\
&V_6(x) = 6 x^5 -129x^4+246x^3 - 121 x^2 + 20 x -1,   \nonumber  \\
&A_5(n; x) = (n^5 -6 n^4 +15 n^3 - 20 n^2 +15 n -6) x^5 + (n^4 - 11n^3  \nonumber \\ & \qquad \qquad
+ 49n^2 - 111n  +129)x^4 + (n^3 - 15n^2 + 88 n - 246) x^3 \nonumber \\ & \qquad \qquad + (n^2 - 18 n +121) x^2 + (n - 20) x + 1 .   \label{3.18a}
\end{align}
\end{itemize}

It is already noted that polynomial $A_k (n; x)$ plays central role in $p$-adic invariant summation of the series \eqref{2.1}.
$A_k (n; x)$ as well as $U_k (x)$ and $V_k (x)$ can be written it the compact form
\begin{equation}
A_k (n; x) = \sum_{\ell = 0}^k A_{k\ell} (n) \, x^\ell , \quad U_k (x)= \sum_{\ell = 0}^k  U_{k\ell} \, x^\ell ,   \quad V_k (x) = \sum_{\ell = 0}^k  V_{k\ell} \, x^\ell ,      \label{3.18b}
\end{equation}
where $A_{k\ell} (n)$ is polynomial in $n$ of degree $\ell$ with $n^\ell$ as the term of highest degree.

Then the following properties hold:
\begin{itemize}
\item  $A_k(n; 0) = A_{k0} = 1 ,  \, \, k = 0, 1, 2, ...$
\item  $A_{kk}(1) = (-1)^k , \, \,  k = 0, 1, 2, ...$
\item  $ U_{k0} = V_{k0} =  -1 , \, \, k =  1, 2, ...$
\item  $U_{kk}= (-1)^{k+1} , \, \,  k =  1, 2, ... $
\item  $V_{kk} = (-1)^{k} k , \, \,  k =  1, 2, ...$
\end{itemize}

\subsection{Connection with Bernoulli Numbers and Some \\ Simple Examples}

Using the Volkenborn integral one can make connection of our summation formulas with the Bernulli numbers. By definition the Volkenborn integral \cite{schikhof} is
$$
\int_{\mathbb{Z}_p} f(x)\, dx = \lim_{x \to \infty} p^{-n} \sum_{j=0}^{p^n -1} f(j) ,
$$
where $f$ is a $p$-adic continuous function. One of its properties is
\begin{equation}
\int_{\mathbb{Z}_p} x^n \, dx  = B_n ,  \quad n = 0, 1, 2, ... , \label{3.19a}
\end{equation}
where $B_n$ are the Bernoulli numbers which can be defined by the recurrent relation
$$
\sum_{j=0}^{n-1}\Big(\begin{aligned} &n \\ &j \end{aligned} \Big) B_j =0 , \, \, B_0 = 1 . \nonumber
$$
Note that $B_1 = -\frac{1}{2}, \quad B_3 = B_5 = ... = B_{2n+1} = 0 .$

As an illustration of the above summation formula \eqref{3.11}, we present three simple (k = 1, 2, 3) examples
including connection with the Bernoulli numbers.

\begin{itemize}

\item  $ k = 1$
\begin{align}
 &\sum_{n=0}^{\infty} n! \, [(n + 1)x  - 1]\, x^n = -1 ,  \quad x \in \mathbb{Z} , \\   \label{3.19}
&\sum_{n=0}^{\infty} n! \, n \,  x^n = -1 ,  \quad x = 1, \nonumber  \\
&\sum_{n=0}^{\infty} n! \, (-1)^n \,(n + 2)  =  1 ,  \quad x = - 1, \nonumber \\
&\sum_{n=0}^{\infty} n! \, [(n + 1) B_{n+1}  - B_n] = -1 . \nonumber
\end{align}

\item $k = 2$
\begin{align}
 &\sum_{n=0}^{\infty} n! \, [(n^2 - 1) x^2 + 3x - 1]\, x^n = 2x -1 ,  \quad x \in \mathbb{Z} , \\   \label{3.20}
&\sum_{n=0}^{\infty} n! \, (n^2 + 1) = 1 ,  \quad x = 1 ,     \nonumber \\
&\sum_{n=0}^{\infty} (-1)^n \, n! \, (n^2 - 5) = -3 ,  \quad x = -1 , \nonumber \\
&\sum_{n=0}^{\infty} n! \, [(n^2 - 1) B_{n+2} + 3 B_{n+1} - B_n] = -2 . \nonumber
\end{align}

\item $k = 3$
\begin{align}
 &\sum_{n=0}^{\infty} n! \, [(n^3 + 1) x^3  - 7 x^2 + 6x - 1]\, x^n = -3x^2 + 5x -1 ,  \quad x \in \mathbb{Z} , \\   \label{3.21}
&\sum_{n=0}^{\infty} n! \, (n^3 - 1) x^3  = 1 ,  \quad x = 1 ,   \\
&\sum_{n=0}^{\infty} (-1)^n \, n! \, (n^3 + 15) =  9 ,  \quad x = -1 ,    \\
&\sum_{n=0}^{\infty} n! \, [(n^3 + 1) B_{n+3}  - 7 B_{n+2} + 6 B_{n+1} - B_n] = -4 .
\end{align}
\end{itemize}

The above series with the Bernoulli numbers are $p$-adic convergent, because $|B_n|_p \leq p$ (see \cite{schikhof}, p. 172).

\section{Concluding Remarks}
\bigskip

The main results presented in this paper are Theorem 1 and Theorem 2. These results are  generalizations of an earlier
result for $x = 1$ \cite{bd10} (see also \cite{bd9,bd11} for some partial generalizations).

Finite series with their sums  \eqref{3.4} are valid for real and $p$-adic numbers. When $n \to \infty$ the corresponding infinite
series are divergent in real case, but are convergent and have  the same sums in all $p$-adic cases. This fact can be used to extend
these sums to the real case. Namely, the sum of a divergent series depends on the way of its summation and here it can be used rational sum
of the series valuable in all $p$-adic number fields. This way of summation of real divergent series was introduced for the first time in
\cite{bd2} and called adelic summability. How this adelic summability is important depends on its practical future use in some concrete examples.

The simplest infinite series with $n!$ is $\sum n! .$ It is convergent in all $\mathbb{Z}_p ,$ but has not $p$-adic invariant sum. Even it is not
known so far does it has a rational sum in any $\mathbb{Z}_p .$  Rationality of this series and $\sum n! n^k x^n$ was discussed in \cite{bd9}. The series
$\sum n!$ is also related to Kurepa hypothesis which  states $(!n, n!) = 2, \quad 2 \leq n \in \mathbb{N} ,$  where $!n = \sum_{j=0}^{n-1} j!$. Validity
of this hypothesis is still an open problem in number theory. There are many equivalent statements to the Kurepa hypothesis, see  \cite{bd10} and references
therein. From $p$-adic point of view, the Kurepa hypothesis reads: $\sum_{j=0}^{\infty} j!  = n_0 + n_1 p + n_2 p^2 + \cdots ,$ where  digit $n_0 \neq 0$ for all primes $p \neq 2 .$

When $x = 1$, then \eqref{3.11} becomes
\begin{equation}
 \sum_{n=0}^{\infty} n! \, [ n^k x^k + u_k ] x^n = v_k ,    \label{3.22}
\end{equation}
where $u_k = U_k (1)$ and $v_k = V_k (1)$ are some integers. Equality \eqref{3.22} was introduced in \cite{bd5}, and properties of $u_k$ and $v_k$ are investigated in series of papers by Dragovich (see references \cite{bd8,bd9,bd10,bd11}). In \cite{murty} relationships  of $u_k$ with the Stirling and the Bell numbers are established, and $p$-adic irrationality of $\sum_{n \geq 0} n! n^k$ was discussed (\cite{subedi1,subedi2,alexander}). Note that the following sequences are related to some real (combinatorial) problems \cite{sloane}:
\begin{align}
&A_{k-1} (0;1) = -V_k (1) = - v_k : \, \, 1, -1, -1, 5, -5, -21, ...        \quad  \text{see  \, A014619} \\
&A_{k-1} (0;-1) = -V_k (-1) = - \bar{v}_k : \, \, 1, 3, 9, 31, 121, 523, ...   \quad  \text{see  \, A040027} \\
&A_{k-1} (1;1) - A_{k-1}(0;1) = U_k (1) = u_k : \, \, 0, 1, -1, -2, 9, -9, ...  \quad  \text{see  \, A000587} \\
&A_{k-1} (1;-1) + A_{k-1}(0;-1) = -U_k (-1) = -\bar{u}_k : \, \, 2, 5, 15, 52, 203, ... \quad  \text{see  \, A000110}.
\end{align}

There are many possibilities how results obtained in this paper can be generalized in future research of $p$-adic invariant summation of
some real divergent series. Various  aspects of the sequence of polynomials $A_k (n; x)$ deserve to be analyzed.
It would be also interesting to investigate relations between the Bernoulli numbers using
the above finite sums of the form \eqref{3.4}.

\section*{Acknowledgments}
Work on this paper was supported by Ministry of Education, Science and Technological Development of the Republic of Serbia, projects: OI 174012, TR 32040 and
TR 35023.

\bibliographystyle{amsplain}

\end{document}